\documentclass{article}

\usepackage{graphicx} 
\usepackage{graphicx} 
\usepackage{a4wide}
\usepackage[T1]{fontenc}
\usepackage{a4wide}
\usepackage{lipsum}
\usepackage{mathtools}
\usepackage{amsthm}
\usepackage{amsfonts}
\usepackage{yfonts}
\usepackage{amssymb}
\usepackage{xcolor}
\usepackage{mathrsfs}
\usepackage{amsmath}
\usepackage{cite}
\usepackage{epsfig}
\usepackage{stmaryrd}
\usepackage[utf8]{inputenc} 
\usepackage[shortlabels]{enumitem}
\usepackage[T1]{fontenc}
\usepackage{caption}
\usepackage{soul}

\usepackage[utf8]{inputenc}
\usepackage[utf8]{inputenc}
\usepackage{fullpage}
\usepackage{amsthm}
\usepackage[pdftex,colorlinks,citecolor=blue]{hyperref}
\usepackage{hyperref} 
\usepackage{cleveref}
\usepackage{amsmath, amssymb, amsbsy}
\usepackage{algorithm}
\usepackage{algcompatible}
\usepackage{algpseudocode}
\usepackage{tikz,caption}
\usetikzlibrary{shapes,arrows,calc}

\allowdisplaybreaks

\crefname{figure}{figure}{}
\Crefname{figure}{Figure}{}

\usetikzlibrary{patterns,calc}

\newcommand{\beq}[1]{\begin{equation}\label{#1}}
\newcommand{\enq}[0]{\end{equation}}

\newtheorem{theorem}{Theorem}[section]

\newtheorem{lemma}[theorem]{Lemma}
\AddToHook{env/lemma/begin}{\crefalias{theorem}{lemma}}
\newtheorem*{claim*}{Claim}

\AddToHook{env/claim/begin}{\crefalias{theorem}{claim}}

\AddToHook{env/proposition/begin}{\crefalias{theorem}{proposition}}
\newtheorem{corollary}[theorem]{Corollary}
\AddToHook{env/corollary/begin}{\crefalias{theorem}{corollary}}

\AddToHook{env/fact/begin}{\crefalias{theorem}{fact}}
\newtheorem*{fact*}{Fact}

\theoremstyle{definition}

\AddToHook{env/defn/begin}{\crefalias{theorem}{definition}}

\AddToHook{env/remark/begin}{\crefalias{theorem}{remark}}
\newtheorem*{remark*}{Remark}

\definecolor{andrey}{rgb}{0.3, 0.65,,0.2}
\definecolor{trees}{rgb}{1, .7,.5}
\definecolor{vertex}{rgb}{.65, 0,.85}

\usepackage[margin=1in]{geometry} 

\title{A Note on the Asymptotic Least Density of Covering Codes in $[q]^n$}
\author{Andrey Shapiro\thanks{King's College London. E-mail: \tt{andrey.shapiro@kcl.ac.uk}}}
\date{}

\begin{document}
\maketitle

\begin{abstract}
    In this short note we revisit the upper bound of the asymptotic least density of covering codes of radius $R$ in $[q]^n$ established by Krivelevich, Sudakov, and Vu \cite{covering}. We show that by using a slightly different optimization in their core theorem we can obtain a constant factor improvement to their upper bound. 
\end{abstract}

\section{Introduction}

Let $[q]^n$ denote the set of all words of length $n$ over the finite alphabet $[q]:=\{1,2,\ldots,q\}$.
A covering code $K$ of $[q]^n$ of radius $R$ is a set of points such that for all $z\in[q]^n$, there is some $z'\in K$ such that $z$ and $z'$ are Hamming distance at most $R$ apart. That is, $z$ and $z'$ differ on at most $R$ indices. Finding small covering codes is a central problem in coding theory and we refer the reader to the work by Cohen, Honkala, Litsyn, and Lobstein \cite{textbook} for a detailed exposition of covering codes.
The purpose of this small note is to point out a constant factor improvement of the result in the paper by Krivelevich, Sudakov, and Vu \cite{covering}. At the time, \cite{covering} established the best known upper bound of the asymptotic least density of covering codes of radius $R$ on $[q]^n$ and to the best of our knowledge, it has remained unimproved nearly 25 years later. Our improvement holds for $R\geq 6$ and comes from a minor alteration to their main theorem.

Let us begin by reviewing the necessary definitions. First, denote the size of a Hamming ball of radius $R$ in $[q]^n$ by $$V_q(n,R)\coloneqq\sum_{i=0}^R (q-1)^i{n\choose i},$$ and for $K$ a covering code of radius $R$ we define $|K|\cdot V_q(n,R)/q^n$ to be the density of $K$. Further, we define $\mu_q(n,R)$ to be the minimal density of a covering code of $[q]^n$ of radius $R$. Further, let $$\mu_q^*(R)\coloneqq \limsup_{n\to \infty} \mu_q(n,R).$$
Finally, for $A\subset [q]^n$ and $B\subset [q]^m$ define $A\oplus B\coloneqq\{(a,b):a\in A, b\in B\}\subset [q]^{n+m}.$

We now present our main theorem and corollary (note that here and for the entirety of this note all logarithms have natural base).
\begin{theorem}\label{mainT}
For all values of $q$, $R\geq 1$, $y>1$, and $x>0$ satisfying $e^{-x}y^R<1$ we have:
    $$\mu_q^*(R)\leq x\left(\frac{y}{y-1}\right)^R\left(1+\frac{1}{e^{x}y^{-R}-1}\right)$$
\end{theorem}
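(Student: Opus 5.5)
We follow the construction of Krivelevich, Sudakov and Vu: build an almost-covering of $[q]^n$ from a random set, then repair the uncovered points with a cheap product construction. The only new ingredient is the way the free parameters are chosen.

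\textbf{Setup.} Fix $R$, $q$, $x>0$ and $y>1$ with $e^{-x}y^R<1$. Write $n=m+k$, where $k$ grows slowly with $n$ (say $k=o(n)$ but $k\to\infty$). Split each word $z\in[q]^n$ as $z=(u,v)$ with $u\in[q]^m$ and $v\in[q]^k$.

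\textbf{First stage (random part).} Choose a random set $A\subset[q]^m$ by including each point independently with probability
\[
p=\frac{x}{V_q(m,R)}.
\]
The expected size of $A$ is $x\,q^m/V_q(m,R)$. A point $u\in[q]^m$ is missed by every radius-$R$ ball centred in $A$ with probability
\[
(1-p)^{V_q(m,R)}\le e^{-x}.
\]

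\textbf{Refinement by distance.} Instead of covering only with radius $R$, we use that the Hamming ball sizes satisfy
\[
V_q(m,R-j)\approx V_q(m,R)\left(\frac{R}{(q-1)m}\right)^{j}.
\]
The plan is to allow radius $R-j$ in the first $m$ coordinates and radius $j$ in the last $k$ coordinates. Concretely, the code is
\[
K=\bigcup_{j=0}^{R} A_{j}\oplus B_j,
\]
where $A_j$ is a random $(R-j)$-almost-cover of $[q]^m$ and $B_j$ covers a suitable part of $[q]^k$ with radius $j$.

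Choosing the parameter $k$ so that $(q-1)k/(m/R)$ is tuned makes $y$ appear. Each "level" then contributes a geometric factor $y^{-1}$, and summing over levels gives the factor
\[
\left(\frac{y}{y-1}\right)^R=\left(\sum_{i\ge0}y^{-i}\right)^R.
\]
This comes from the multinomial expansion of $V_q(m+k,R)$ relative to $V_q(m,R)$.

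\textbf{Second stage (uncovered points).} The points left uncovered after the first stage have density at most $e^{-x}$ per layer. They are inflated by at most $y^R$ across the $R$ levels, so the uncovered proportion is at most $e^{-x}y^R<1$.

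We then iterate: apply the same random construction to the uncovered remainder. Each iteration multiplies the residual by $e^{-x}y^R$, so the total extra cost is the geometric series
\[
\sum_{t\ge1}(e^{-x}y^R)^t=\frac{1}{e^{x}y^{-R}-1}.
\]
Adding the base cost gives the factor $1+\frac{1}{e^xy^{-R}-1}$.

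\textbf{Conclusion and main obstacle.} Multiplying the three factors gives the bound on the density for each large $n$, and taking $\limsup$ yields $\mu_q^*(R)$.

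I expect the hard step to be the careful bookkeeping in the refinement by distance. We must verify the asymptotic ratio $V_q(m,R-j)/V_q(m,R)$ uniformly, and also check that the error terms vanish as $n\to\infty$ with $k=o(n)$. The first approach I would try is to make all constructions expectation arguments, using linearity of expectation, so that we can then pick an outcome at least as good as the mean.
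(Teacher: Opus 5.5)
There is a genuine gap. Your random first stage ($A\subset[q]^m$ with $p=x/V_q(m,R)$, then linearity of expectation) is fine and does the job of the paper's covering lemma. The trouble starts with the choice of scales.

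\textbf{The scale $k=o(n)$ is wrong.} With $k=o(n)$ you have $(q-1)kR/m\to 0$, so no tuning can make a fixed $y>1$ appear. It also makes the repair unaffordable. About $e^{-x}q^m$ points of $[q]^m$ stay uncovered. Repairing $\overline{N}(A)\times[q]^k$ by a product with a radius-$R$ code of $[q]^k$ costs density about $e^{-x}\mu_q(k,R)V_q(n,R)/V_q(k,R)\ge e^{-x}(1+o(1))(n/k)^R\to\infty$ for fixed $x$. The parameter $y$ must be the ratio $n/k$: take $k=\lfloor n/y\rfloor$ and $m=n-k$. Then $A\oplus[q]^k$ has density $xV_q(n,R)/V_q(m,R)\to x(y/(y-1))^R$, simply because $n/m\to y/(y-1)$.

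This makes the ``refinement by distance'' with the codes $A_j\oplus B_j$ unnecessary. As written it is also unspecified: you never say what the $B_j$ cover, or why $\bigcup_j A_j\oplus B_j$ covers anything when the $A_j$ are only almost-covers. The identity $(y/(y-1))^R=(\sum_i y^{-i})^R$ is true but is not where this factor comes from.

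\textbf{The second stage is not a construction.} ``Apply the same random construction to the uncovered remainder'' is not well defined, since $\overline{N}(A)\times[q]^k$ is not a Hamming cube. ``Inflated by $y^R$ across the $R$ levels'' is an assertion without an argument. The missing idea is a recursion in the length $n$. Cover the remainder by $\overline{N}(A)\oplus K_2$, where $K_2$ is an \emph{optimal} radius-$R$ code of $[q]^{k}$. Taking expectations then gives
\[
\mu_q(n,R)\le x\,\frac{V_q(n,R)}{V_q(m,R)}+e^{-x}\,\frac{V_q(n,R)}{V_q(k,R)}\,\mu_q\bigl(\lfloor n/y\rfloor,R\bigr).
\]
Here $V_q(n,R)/V_q(k,R)\to y^R$ is the actual source of the factor $y^R$.

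The geometric series appears only in the limit. You cannot iterate infinitely often at a fixed $n$, and you must also control $\limsup_n\mu_q(n,R)$. What you need is a lemma of the form: if $s_n\le a_n+b_n s_{\lfloor n/y\rfloor}$ with $\limsup a_n\le a$ and $\limsup b_n\le b<1$, then $\limsup s_n\le a/(1-b)$. Apply it with $a=x(y/(y-1))^R$ and $b=e^{-x}y^R$. Then $a/(1-b)$ is exactly the stated bound $x(y/(y-1))^R\bigl(1+1/(e^{x}y^{-R}-1)\bigr)$.
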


\begin{corollary}\label{mainC}
    For $R\geq 6$ and any value of $q$, $$\mu_q^*(R)\leq e^{(1.8+\log\log R)/\log R}R\log R.$$
\end{corollary}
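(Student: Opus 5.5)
The corollary should follow from Theorem~\ref{mainT} by a judicious choice of $x$ and $y$. The main term is $x\left(\frac{y}{y-1}\right)^R$. The correction factor $1+\frac{1}{e^{x}y^{-R}-1}$ should be close to $1$, which requires $e^{x}y^{-R}$ to be large, that is, $x - R\log y$ large. The natural ansatz is
$$y=\frac{R\log R}{c}$$
for some small parameter $c$, together with
$$x=R\log y+\log(R\log R) = R\log R+R\log\log R-R\log c+\log(R\log R).$$
With these choices $e^{x}y^{-R}=R\log R$, so the last factor is $1+\frac{1}{R\log R-1}$.

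Next I estimate the middle factor: $\left(\frac{y}{y-1}\right)^R\le \exp\!\left(\frac{R}{y-1}\right)$, which is roughly $e^{c/\log R}$. The leading term $x$ equals
$$R\log R\left(1+\frac{\log\log R-\log c}{\log R}+\frac{\log(R\log R)}{R\log R}\right).$$
Writing $1+t\le e^t$, the total bound becomes
$$R\log R\cdot\exp\!\left(\frac{\log\log R-\log c+c}{\log R}+\varepsilon(R)\right),$$
where $\varepsilon(R)$ collects the terms $\frac{\log(R\log R)}{R\log R}$, $\frac{1}{R\log R-1}$, and the error from replacing $R/(y-1)$ by $c/\log R$.

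Then I optimize $c-\log c$, whose minimum is at $c=1$ with value $1$. This yields an exponent of $(1+\log\log R)/\log R+\varepsilon(R)$, which leaves a slack of $0.8/\log R$ to absorb $\varepsilon(R)$. Concretely, I need
$$\varepsilon(R)\log R\le 0.8 \quad\text{for all } R\ge 6.$$
Each piece of $\varepsilon(R)$ multiplied by $\log R$ is of order $(\log R)/R$ or smaller, so the inequality holds for large $R$. For the moderate range one checks it directly; the worst case is $R=6$, where $\log 6\approx1.79$ and $y\approx10.75$. The hypotheses of Theorem~\ref{mainT} are immediate: $y>1$ holds once $R\log R>1$, and $e^{-x}y^R=1/(R\log R)<1$.

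The main obstacle is this explicit bookkeeping near $R=6$, where the asymptotic slack $0.8/\log R$ is not automatically large compared with the error terms. If the straightforward choice leaves too little room, I would first perturb $c$ slightly away from $1$. I would also consider making the correction factor bigger in exchange for a smaller $x$, for instance setting $e^{x}y^{-R}=\log R\cdot R/2$. After that I would verify the small cases, $R=6,7,\dots$ up to the point where the monotone asymptotic estimate takes over, either numerically or by crude monotonicity bounds on each error term.
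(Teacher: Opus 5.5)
Your proposal is correct and is essentially the paper's argument. The paper takes $y=R\log R+1$, so that $R/(y-1)=1/\log R$ exactly, and $x=R\log y+2\log R$, so the correction factor is $1+\frac{1}{R^2-1}$. Your choice $c=1$ gives $y=R\log R$ and correction factor $1+\frac{1}{R\log R-1}$. Otherwise the steps match: the bound $(y/(y-1))^R\le e^{R/(y-1)}$, then $1+t\le e^t$, then a numerical check for $R\geq 6$.

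The check you deferred goes through without perturbing $c$. Your error term satisfies $\varepsilon(R)\log R=\frac{\log(R\log R)}{R}+\frac{1+\log R}{R\log R-1}$. This is decreasing in $R$ and is about $0.68<0.8$ at $R=6$.
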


For reference, we provide here the main theorem and corollary in \cite{covering}.

\begin{theorem}[Theorem 1.5 in \cite{covering}]\label{their_mainT}

For all values of $q$, $R>R_1\geq 1$, $y>1$, and $x>0$ satisfying $e^{-x}y^R<1$ we have:
    $$\mu_q^*(R)\leq x{R\choose R_1}^{-1}y^{R_1}\left(\frac{y}{y-1}\right)^{R-R_1}\left(1+\frac{1}{e^{x}y^{-R}-1}\right)\mu_q^*(R_1)$$
    
\end{theorem}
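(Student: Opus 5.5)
The plan is a product construction that combines a good radius-$R_1$ code on a fraction of the coordinates with a random set on the remaining coordinates. Any leftover uncovered points are then handled by iterating, and that iteration is where the factor $\left(1+\frac{1}{e^{x}y^{-R}-1}\right)=\sum_{k\ge 0}(e^{-x}y^R)^k$ comes from.

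\textbf{Splitting and ball sizes.} Fix a large $n$ and split the coordinates as $n=n_1+n_2$ with $n_1\approx n/y$ and $n_2\approx n(y-1)/y$. Let $C\subset[q]^{n_1}$ be a covering code of radius $R_1$ with density at most $\mu_q^*(R_1)+o(1)$. For fixed $R$ and $n\to\infty$ we have $V_q(n,R)\sim (q-1)^Rn^R/R!$. A direct computation then gives
$$\frac{V_q(n,R)}{V_q(n_1,R_1)\,V_q(n_2,R-R_1)}\longrightarrow {R\choose R_1}^{-1}y^{R_1}\left(\frac{y}{y-1}\right)^{R-R_1}.$$
This ratio is exactly the prefactor in Theorem~\ref{their_mainT}. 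So it suffices to build a code of the form $C\oplus W$, possibly with some modifications, whose size is about $x\,|C|\,q^{n_2}/V_q(n_2,R-R_1)$ times the geometric-series factor.

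\textbf{Random second factor.} Let $W\subset[q]^{n_2}$ be a random set in which each point is included independently with probability $p=x/V_q(n_2,R-R_1)$, and put $K_0=C\oplus W$. A point $z=(u,v)$ is covered by $K_0$ if some $c\in C$ with $d(u,c)\le R_1$ is paired with some $w\in W$ with $d(v,w)\le R-R_1$. More generally, $z$ is covered if some $c\in C$ with $d(u,c)=i\le R$ is paired with some $w$ at distance $\le R-i$. So we count, over all $c\in C$ within distance $R$ of $u$, the number of admissible $w$. The probability that $z$ is uncovered is $\prod(1-p)^{(\cdot)}\le e^{-p\cdot N(z)}$, where $N(z)$ is the number of covering pairs. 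I would lower-bound $N(z)$ in a way that makes this probability at most $e^{-x}y^R$. The factor $y^R$ should arise as a union/averaging loss: summing over the ways the $R$ errors distribute between the two blocks, weighted by $(1/y)^i((y-1)/y)^{R-i}$, or equivalently by first averaging over a random permutation of coordinates so that $z$ looks typical.

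\textbf{Iteration.} Let $U_0$ be the set of uncovered points, with $\mathbb{E}|U_0|\le e^{-x}y^Rq^n$. Rather than greedily covering $U_0$, which would cost a full ball per point, I would rerun the same construction with fresh randomness but only "charge" it against $U_0$. At each stage the uncovered set shrinks by a factor $e^{-x}y^R<1$ and the cost is proportional to the current uncovered fraction. Summing the stages gives total density $x\cdot\frac{1}{1-e^{-x}y^R}$ times the prefactor and $\mu_q^*(R_1)$. Letting $n\to\infty$ gives the bound.

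\textbf{Main obstacle.} The delicate step is bounding the non-coverage probability by exactly $e^{-x}y^R$ in the second paragraph. The pairs $(c,w)$ covering a given $z$ are not independent events once $C$ is fixed, and the distances from $u$ to nearby codewords of $C$ vary. I would first try to avoid this by randomizing the coordinate split, using a random permutation $\sigma$ and taking $K=\sigma(C\oplus W)$. Then, conditioned on $z$, the relevant counts become expectations over hypergeometric splits, and convexity ($e^{-t}$ is convex) should give the bound with the clean constant $y^R$.
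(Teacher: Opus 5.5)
Your first round is sound and is essentially the construction of \cite{covering}. The random $W$ with $p=x/V_q(n_2,R-R_1)$ is the probabilistic form of \Cref{X_gen} on the distance-$(R-R_1)$ graph of $[q]^{n_2}$, and your volume ratio is exactly the prefactor. The gap is in how you handle the leftover points, and it starts with a misdiagnosis of where $y^R$ comes from. Because $C$ covers $[q]^{n_1}$ with radius $R_1$, every $(u,v)$ with $v$ within distance $R-R_1$ of $W$ is already covered. Hence the uncovered set lies in the product $[q]^{n_1}\times\overline{N}(W)$, and each point is uncovered with probability at most $(1-p)^{V_q(n_2,R-R_1)}\le e^{-x}$. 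No permutation or convexity argument is needed, and no $y^R$ arises at this stage.

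Your iteration step is not a defined procedure. Nothing about $C\oplus W$ lets you cover an arbitrary $U_0\subset[q]^n$ at cost proportional to $|U_0|/q^n$, and ``the cost is proportional to the current uncovered fraction'' is exactly what would need proof. As a sanity check: the true leftover fraction is $e^{-x}$, so such a step would give $1/(1-e^{-x})$ in place of $1/(1-e^{-x}y^R)$, with no constraint linking $x$ and $y$. That is much stronger than the theorem.

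The missing idea is that $y^R$ is a \emph{cost} multiplier from passing to a shorter block, and the geometric series comes from a recursion in the length. Take $n_1=\lfloor n/y\rfloor$ and add $K_2\oplus\overline{N}(W)$, where $K_2$ is an optimal radius-$R$ covering code of $[q]^{n_1}$. A point $(u,v)$ with $v\in\overline{N}(W)$ is within $R$ of $(k,v)$ for some $k\in K_2$. This adds density at most $e^{-x}\mu_q(n_1,R)\,V_q(n,R)/V_q(n_1,R)=(1+o(1))\,e^{-x}y^R\mu_q(\lfloor n/y\rfloor,R)$.

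Choose an outcome of $W$ whose total cost is at most its expectation, or simply invoke \Cref{X_gen}. This gives $\mu_q(n,R)\le a_n+b_n\,\mu_q(\lfloor n/y\rfloor,R)$, where $\limsup a_n\le x\binom{R}{R_1}^{-1}y^{R_1}\bigl(\frac{y}{y-1}\bigr)^{R-R_1}\mu_q^*(R_1)$ and $\limsup b_n\le e^{-x}y^R<1$. \Cref{limit} then yields $a/(1-b)$, which is the claimed bound since $1/(1-b)=1+1/(e^{x}y^{-R}-1)$. Unrolling along $n, n/y, n/y^2,\dots$ produces $\sum_k(e^{-x}y^R)^k$. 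The $k$-th level pays $e^{-kx}$ in leftover fraction and about $y^{kR}$ in the ratio of ball volumes.
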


\begin{corollary}[Corollaries 1.4 and 1.6 in \cite{covering}]\label{their_mainC}
    For $R\geq 3$, $$\mu_2^*(R)\leq e(R\log R+\log R+\log\log R+2)$$ and for any $q\geq 3$ $$\mu_q^*(R)\leq 2e(R\log R+\log R+\log\log R+2).$$
\end{corollary}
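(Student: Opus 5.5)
The plan is to obtain this as a direct specialization of Theorem~\ref{their_mainT} with $R_1=1$. Everything then reduces to two ingredients: a bound on the radius-one density $\mu_q^*(1)$, and an explicit choice of $y$ and $x$ that makes the remaining factors collapse to $e(R\log R+\log R+\log\log R+2)$.

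\textbf{Radius one.} For $R_1=1$ I will use the classical facts, which I take from the covering-code literature (e.g.\ \cite{textbook}) rather than from this paper: $\mu_2^*(1)=1$, and $\mu_q^*(1)\le 2$ for every $q\ge 3$. For $q=2$ these come from the perfect Hamming codes of length $2^m-1$, combined with the standard argument that lengths between consecutive Hamming lengths lose only a $1+o(1)$ factor in density. For general $q$, the factor $2$ comes from $q$-ary Hamming codes at prime-power lengths together with a direct-sum or puncturing argument for arbitrary $n$ and $q$. This step is where I expect the main obstacle, since these facts are imported rather than derived from the results stated here. With this input, the $q\ge3$ bound is exactly twice the $q=2$ bound, so it suffices to show
\[
x\,R^{-1}y\Bigl(\tfrac{y}{y-1}\Bigr)^{R-1}\Bigl(1+\tfrac{1}{e^{x}y^{-R}-1}\Bigr)\le e(R\log R+\log R+\log\log R+2).
\]

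\textbf{Choice of $y$.} Take $y=R$. Then $R^{-1}y=1$ and
\[
\bigl(\tfrac{R}{R-1}\bigr)^{R-1}=\bigl(1+\tfrac{1}{R-1}\bigr)^{R-1}<e,
\]
which produces the leading factor $e$.

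\textbf{Choice of $x$.} Take $x=R\log R+\log R+\log\log R+1$. Then
\[
e^{x}y^{-R}=e^{x}R^{-R}=e\,R\log R>1 \qquad (R\ge 3),
\]
so the hypothesis $e^{-x}y^R<1$ of Theorem~\ref{their_mainT} holds. The correction term then satisfies
\[
x\cdot\frac{1}{e^{x}y^{-R}-1}=\frac{R\log R+\log R+\log\log R+1}{eR\log R-1}.
\]
I will check by a short computation that this is at most $1$ for $R\ge3$: the numerator is about $R\log R$ and the denominator about $eR\log R$, so the ratio is roughly $1/e$, with room to spare. Hence
\[
x\Bigl(1+\tfrac{1}{e^xy^{-R}-1}\Bigr)\le x+1=R\log R+\log R+\log\log R+2.
\]
Multiplying by the factor below $e$ from the choice of $y$ and by $\mu_q^*(1)\in\{1,2\}$ gives both claimed bounds. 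The only delicate arithmetic is verifying that the correction term is at most $1$ for the smallest case $R=3$, which is a direct numerical check.
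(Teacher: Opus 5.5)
Your reduction is the right one and the arithmetic is correct. Take $R_1=1$, $y=R$, $x=R\log R+\log R+\log\log R+1$ in Theorem~\ref{their_mainT}. Then $R^{-1}y=1$, $(R/(R-1))^{R-1}<e$, $e^{x}y^{-R}=eR\log R>1$, and $x/(eR\log R-1)\le 1$ for $R\ge 3$. This gives $\mu_q^*(R)\le e(R\log R+\log R+\log\log R+2)\,\mu_q^*(1)$, which is where the constants in the cited corollary come from. The note itself only quotes the statement from \cite{covering} and gives no proof, so there is nothing in it to compare your route against.

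The gap is in the radius-one input: the justification you sketch for it is false. Interpolating between consecutive Hamming lengths by direct sum or puncturing does not cost only a factor $1+o(1)$. Take $n=2^{m+1}-2$. The direct sum of the length-$(2^m-1)$ Hamming code with $[2]^{2^m-1}$ has $2^{n-m}$ words, and so does the once-punctured length-$(2^{m+1}-1)$ Hamming code. Both therefore have density $(n+1)/2^m=2-2^{-m}$. That route gives only $\mu_2^*(1)\le 2$, and fed into your computation it yields $2e(\cdots)$ for $q=2$, not the claimed $e(\cdots)$. The equality $\mu_2^*(1)=1$ is a nontrivial theorem (Kabatiansky--Panchenko). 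You must cite it as such rather than present it as a consequence of Hamming codes plus length interpolation.

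For $q\ge 3$ the sketch fails more seriously. $q$-ary Hamming codes exist only when $q$ is a prime power, and then at lengths $(q^m-1)/(q-1)$, not at ``prime-power lengths''. For other $q$ there is nothing to interpolate. Even for prime-power $q$, at $n=(q^{m+1}-1)/(q-1)-1$ both the direct-sum and the puncturing constructions have $q^{n-m}$ words and density $q-(q-1)q^{-m}$. So this route gives only $\mu_q^*(1)\le q$, which is already worse than $2$ for $q=3$. The bound $\mu_q^*(1)\le 2$ for all $q\ge 3$ therefore cannot be obtained the way you indicate. It has to be imported as an external black-box bound; that is what the factor $2$ in the cited corollary encodes. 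With both radius-one facts cited rather than ``derived'', the rest of your argument is complete.
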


Formally, \Cref{mainT} is what one obtains from Theorem \Cref{their_mainT} by setting $R_1=0$, since $\mu^*_q(0)=1$.
As a result, our proof will closely follow the proof of \Cref{their_mainT} presented in \cite{covering} while showing that it remains valid when setting $R_1=0$. We point this out because it indicates that the bound improves when we sidestep the dependency on $\mu_q^*(R_1)$ in \Cref{their_mainT}. As for \Cref{mainC}, it is easy to see it is stronger than \Cref{their_mainC} when $R\geq 6$. It should be noted that a similar version of \Cref{mainC} can be proven for $R\leq 5$ but it would ultimately yield a bound weaker than that of \Cref{their_mainC} when $q=2$. 

\section{Proofs of main results}

Since it is the more direct result, we will begin by proving \Cref{mainC} using \Cref{mainT}.
\begin{proof}[Proof of \Cref{mainC}]
    Set $y=R\log R +1$ and
    $x=R\log (y) + 2\log R$. Recall that $(1+\varepsilon)\leq e^\varepsilon$ and so $\left(\frac{y}{y-1}\right)^{R}\leq e^{R/(y-1)}=e^{1/\log R}.$ Next, observe that $y>1$, $x>0$, and $e^{-x}y^{R}=e^{-x}e^{R\log y}=R^{-2}<1$  and so we can apply \Cref{mainT} to get:
    \begin{align*}
         \mu_q^*(R)& \leq \left(\frac{y}{y-1}\right)^R(R\log (y) + 2\log R)\left(1+\frac{1}{R^2-1}\right)
         \\&\leq e^{1/\log R}(R\log (y) + 2\log R)\left(1+\frac{1}{R^2-1}\right).
    \end{align*}
Further, one can verify that $(R\log(R\log R+1)+2\log R)\left(1+\frac{1}{R^2-1}\right)<R(\log R + \log\log R+0.8)$ for $R\geq 6$. Then again using the fact that $(1+\varepsilon )\leq e^\varepsilon$ we get: 
    \begin{align*}
    \mu_q^*(R)
&< e^{1/\log R}R(\log R + \log\log R+0.8)\\&=e^{1/\log R}R\log R \left(1+\frac{\log\log R + 0.8}{\log R}\right)
         \\& \leq e^{(1.8+\log\log R)/\log R}R\log R.
    \end{align*}
\end{proof}

Now we turn to proving \Cref{mainT}. We begin by stating the following lemmas, the short proofs of which can be found in the original paper \cite{covering}. 

\begin{lemma}\label{limit}
Let $y>1$ be a constant and let $(a_n)$, $(b_n)$, and $(s_n)$ be sequences of positive numbers satisfying:
$$\limsup_{n\to \infty} a_n\leq a, \quad \quad \limsup_{n\to \infty} b_n\leq b<1, \quad \quad \text{and} \quad \quad s_n\leq a_n+b_ns_{\lfloor n/y\rfloor}.$$ Then $$\limsup_{n\to \infty} s_n\leq \frac{a}{1-b}.$$
\end{lemma}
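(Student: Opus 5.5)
The plan is to reduce the statement to a bound on $\sup_{n\ge N}s_n$, with a slightly weakened contraction factor, and then iterate the recursion finitely many times.

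\textbf{Step 1 (fixing constants).} Fix $\varepsilon>0$ small enough that $b':=b+\varepsilon<1$, and set $a':=a+\varepsilon$. By the definition of $\limsup$ there is $N_0$ such that $a_n\le a'$ and $b_n\le b'$ for all $n\ge N_0$. Hence, for $n\ge N_0$,
\[
s_n\le a'+b'\,s_{\lfloor n/y\rfloor}.
\]

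\textbf{Step 2 (iterating the recursion).} For a given large $n$, define $n_0=n$ and $n_{k+1}=\lfloor n_k/y\rfloor$. Since $y>1$, this sequence is nonincreasing and eventually drops below $N_0$. Let $k$ be the number of steps during which $n_j\ge N_0$. Unrolling the inequality $k$ times gives
\[
s_n\le a'(1+b'+\dots+b'^{\,k-1})+b'^{\,k}s_{n_k}\le \frac{a'}{1-b'}+b'^{\,k}M,
\]
where $M:=\max_{m<N_0}s_m$. This maximum is finite because only finitely many indices are involved; the values $s_m$ for $m<N_0$, including $s_0$ if $0$ appears, are all positive reals.

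\textbf{Step 3 (letting $n\to\infty$).} We need $k\to\infty$ as $n\to\infty$. Because $n_j\ge n/y^j-1$ (indeed $n_j\ge n/y^j-\sum_{i\ge1} y^{-i}\ge n/y^j - 1/(y-1)$), we have $n_j\ge N_0$ as long as $n/y^j\ge N_0+1/(y-1)$. So $k\ge \log_y\!\big(n/(N_0+1/(y-1))\big)\to\infty$. Therefore $\limsup_n s_n\le a'/(1-b')$, and letting $\varepsilon\to0$ gives the claim.

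\textbf{Main obstacle.} The only delicate point is the bookkeeping in Step 3: controlling the floors so that the number of usable iterations grows with $n$. It also requires that $M$ be finite, which holds because each $s_m$ is a real number and there are only finitely many of them.
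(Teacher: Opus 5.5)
The paper gives no proof of this lemma and refers to \cite{covering}, so there is nothing to compare against directly. Your argument is a correct, self-contained proof apart from one slip. The ingredients all work as you use them:
\begin{itemize}
\item passing to $a'=a+\varepsilon$ and $b'=b+\varepsilon<1$ beyond some $N_0$;
\item unrolling the recursion along $n_{j+1}=\lfloor n_j/y\rfloor$ while $n_j\ge N_0$;
\item bounding the leftover term by $b'^{\,k}M$, where $M$ is a maximum over finitely many indices.
\end{itemize}
Positivity of $b_n$ and $s_n$ is what justifies $b_n s_m\le b' s_m$ and multiplying inequalities by $b'\ge 0$. Also take $N_0\ge 1$, so that the chain really does drop below $N_0$.

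The slip is in Step 3: neither floor estimate you state holds in general. From $n_{i+1}\ge n_i/y-1$, induction gives $n_j\ge n/y^j-\sum_{i=0}^{j-1}y^{-i}\ge n/y^j-\frac{y}{y-1}$. The sum starts at $i=0$, not $i=1$. Two examples:
\begin{itemize}
\item $y=10$, $n=9$ gives $n_1=0<\frac{9}{10}-\frac{1}{9}$, which refutes the bound with $1/(y-1)$.
\item $y=\frac{3}{2}$, $n=7$ gives $n_1=4$ and $n_2=2<\frac{28}{9}-1$, which refutes $n_j\ge n/y^j-1$ for non-integer $y$.
\end{itemize}
Replacing $1/(y-1)$ by $y/(y-1)$ repairs this and gives $k\ge\log_y\bigl(n/(N_0+\frac{y}{y-1})\bigr)\to\infty$, so your conclusion stands. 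More simply, for each fixed $j$ one has $n_j\to\infty$ as $n\to\infty$, and that alone forces $k\to\infty$.

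For comparison, the other standard short route goes in two steps. First show $s_n\le\max\{M,a'/(1-b')\}$ for all $n$ by strong induction. Then take $\limsup$ of both sides of the recursion, using $\limsup_n s_{\lfloor n/y\rfloor}\le\limsup_n s_n$, to get $s\le a+bs$ with $s=\limsup_n s_n$. Your unrolling avoids the $\limsup$ manipulations and yields an explicit error term $b'^{\,k}M$, at the cost of the index bookkeeping where the slip occurred.
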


\begin{lemma}\label{X_gen}

Let $x>0$ and let $G$ be a $d$-regular graph with $m$ vertices. Then there is a set $X$ of size at most $xm/(d+1)$ so that $$|\overline{N}(X)|\leq e^{-x+(d+1)/m}m$$ where $\overline{N}(X)\coloneqq V(G)\setminus (X\cup N(X)).$
    
\end{lemma}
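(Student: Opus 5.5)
The plan is a first-moment argument: take $X$ to be a set of $k$ vertices drawn uniformly at random \emph{with replacement}, where
$$k\coloneqq\left\lfloor \frac{xm}{d+1}\right\rfloor.$$
Sampling with replacement makes the events for different draws independent, which gives a clean product formula. It also guarantees deterministically that $|X|\leq k\leq xm/(d+1)$, since repeated draws only shrink the set. So the size condition holds for every outcome, and only the bound on $|\overline{N}(X)|$ needs probability.

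Fix a vertex $v$. We have $v\in\overline{N}(X)$ exactly when no draw lands in the closed neighbourhood $N[v]=\{v\}\cup N(v)$. Since $G$ is $d$-regular, $|N[v]|=d+1$. Each draw misses $N[v]$ with probability $1-\frac{d+1}{m}$, so by independence of the $k$ draws and $1-\varepsilon\leq e^{-\varepsilon}$,
$$\Pr[v\in \overline{N}(X)]=\left(1-\frac{d+1}{m}\right)^k\leq e^{-(d+1)k/m}.$$

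The only point needing care is the floor in $k$. From $k\geq \frac{xm}{d+1}-1$ we get $\frac{(d+1)k}{m}\geq x-\frac{d+1}{m}$, so each probability is at most $e^{-x+(d+1)/m}$. This is exactly where the error term $(d+1)/m$ in the exponent comes from. By linearity of expectation, $\mathbb{E}|\overline{N}(X)|\leq e^{-x+(d+1)/m}m$.

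Some outcome of the random choice therefore satisfies $|\overline{N}(X)|\leq e^{-x+(d+1)/m}m$, and that outcome also has $|X|\leq xm/(d+1)$. No step is a genuine obstacle; the only subtlety is choosing sampling with replacement, so that the size bound is deterministic and no concentration argument is needed.
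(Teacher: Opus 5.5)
Your proof is correct and takes essentially the same route as the argument this note relies on. The note itself omits the proof and defers to Krivelevich, Sudakov, and Vu, where $X$ is likewise built from $\lfloor xm/(d+1)\rfloor$ randomly chosen vertices and $\mathbb{E}|\overline{N}(X)|$ is bounded by linearity of expectation, with the factor $e^{(d+1)/m}$ coming from the floor exactly as you describe.
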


With these lemmas, we proceed to the proof of \Cref{mainT}.

\begin{proof}[Proof of \Cref{mainT}]

Let us define $r=\lfloor n/y \rfloor$ and $r'=n-r$. Further, let us view $[q]^{r'}$ as a graph with vertices connected if they are of Hamming distance at most $R$ apart. This graph has $m=q^{r'}$ vertices and is $d$-regular with $d=V_q(r',R)-1$. Next we apply \Cref{X_gen} on this graph to get a set $X$ of size at most $xq^{r'}/V_q(r',R)$ such that $|\overline{N}(X)|\leq e^{-x+V_q(r',R)/q^{r'}}\cdot q^{r'}$. Next, taking $K_2$ to be an optimal covering code of $[q]^{r}$ with radius $R$, let us consider the covering code $$K=X \oplus [q]^{r} \cup \overline{N}(X)\oplus K_2.$$  We can check that it is indeed a covering code by observing that for each $(z_1,z_2)\in [q]^{r'} \times [q]^{r}$, either $z_1$ is distance at most $R$ from some vertex in $X$ and $z_2\in [q]^{r}$, or otherwise, $z_1\in \overline{N}(X)$ and $z_2$ is distance at most $R$ from some point in $K_2$. 

Now let us consider the size of $K$. First,
\begin{equation}\label{X_size}
    |X\oplus [q]^r|=|X|\cdot q^{r}\leq \frac{xq^{r'}}{V_q(r',R)}\cdot q^{r}=\frac{xq^{n}}{V_q(r',R)}.
\end{equation}
Second, since $K_2$ is an optimal covering, 
\begin{equation}\label{N_size}
    |\overline{N}(X)\oplus K_2|=|\overline{N}(X)|\cdot \mu_q(r,R)\cdot \frac{q^{r}}{V_q(r,R)}\leq e^{-x+V_q(r',R)/q^{r'}}\cdot \mu_q(r,R)\cdot \frac{q^{n}}{V_q(r,R)}.
\end{equation}
Then by \cref{X_size} and \cref{N_size}, because $K$ is a covering code of $[q]^n$ of radius $R$ we get the following bound on the optimal density $\mu_q(n,R)$:
$$\mu_q(n,R)\leq |K|\cdot \frac{V_q(n,R)}{q^n}\leq \frac{x\cdot V_q(n,R)}{V_q(r',R)} + e^{-x+V_q(r',R)/q^{r'}}\cdot \mu_q(r,R)\cdot \frac{V_q(n,R)}{V_q(r,R)}.$$

Finally, we set $$a_n\coloneqq \frac{x\cdot V_q(n,R)}{V_q(r',R)}, \quad \quad b_n\coloneqq e^{-x+V_q(r',R)/q^{r'}}\cdot \frac{V_q(n,R)}{V_q(r,R)}, \quad \quad \text{and} \quad \quad s_n\coloneqq \mu_q(n,R)$$ 
with the aim of applying \Cref{limit}. Let us begin by recalling that $V_q(n,R)=\sum_{i=0}^R (q-1)^i{n \choose i}\approx \frac{(n(q-1))^R}{R!}$, or rather, more precisely, $$\frac{V_q(n,R)}{V_q(r,R)}=(1+o(1))\frac{((q-1)n)^R}{((q-1){r})^R}\cdot \frac{R!}{R!}=(1+o(1))y^R \quad\quad  \text{and likewise} \quad\quad \frac{V_q(n,R)}{V_q(r',R)}=(1+o(1))\left(\frac{y}{y-1}\right)^R.$$ Further, $$\lim_{n\to \infty} e^{V_q(r',R)/q^{r'}}=1,$$
and so we can conclude that  $$a\coloneqq\limsup_{n\to\infty}a_n=\limsup_{n\to\infty} \frac{x\cdot V_q(n,R)}{V_q(r',R)}  =x\cdot \left(\frac{y}{y-1}\right)^R$$ and $$ b\coloneqq \limsup_{n\to\infty}b_n=\limsup_{n\to\infty}e^{-x+V_q(r',R)/q^{r'}}\cdot \frac{V_q(n,R)}{V_q(r,R)}=e^{-x}y^R.$$ 
So by \Cref{limit}, since we have ensured that $e^{-x}y^R<1$, we have $$\mu_q^*(R)=\limsup_{n\to \infty} s_n\leq \frac{a}{1-b} =\frac{x\cdot \left(\frac{y}{y-1}\right)^R}{1-e^{-x}y^R}$$ as desired.
\end{proof}

\subsection*{Acknowledgments}
The author wishes to thank Matthew Jenssen for helpful comments. 
\bibliographystyle{plain}
\bibliography{main}

@article {covering,
    AUTHOR = { Krivelevich, M. and Sudakov, B. and Vu, V.},
     TITLE = {Covering Codes With Improved Density},
   JOURNAL = { IEEE TRANSACTIONS ON INFORMATION THEORY},
    VOLUME = {49},
      YEAR = {2003},
    NUMBER = {7},
     PAGES = {1812--1815},
}

@book{textbook,
  title={Covering Codes},
  author={G{\'e}rard D. Cohen and Iiro S. Honkala and Simon Litsyn and Antoine Lobstein},
  booktitle={North-Holland Mathematical Library},
  year={2005},
  url={https://api.semanticscholar.org/CorpusID:195891379}
}

\end{document}